\DeclareSymbolFont{cmletters}{OML}{cmm}{m}{it}              
\DeclareSymbolFont{cmsymbols}{OMS}{cmsy}{m}{n}
\DeclareSymbolFont{cmlargesymbols}{OMX}{cmex}{m}{n}
\DeclareMathSymbol{\myjmath}{\mathord}{cmletters}{"7C}     \let\jmath\myjmath 
\DeclareMathSymbol{\myamalg}{\mathbin}{cmsymbols}{"71}     \let\amalg\myamalg
\DeclareMathSymbol{\mycoprod}{\mathop}{cmlargesymbols}{"60}
\DeclareMathSymbol{\myalpha}{\mathord}{cmletters}{"0B}     \let\alpha\myalpha 
\DeclareMathSymbol{\mybeta}{\mathord}{cmletters}{"0C}      \let\beta\mybeta
\DeclareMathSymbol{\mygamma}{\mathord}{cmletters}{"0D}     \let\gamma\mygamma
\DeclareMathSymbol{\mydelta}{\mathord}{cmletters}{"0E}     \let\delta\mydelta
\DeclareMathSymbol{\myepsilon}{\mathord}{cmletters}{"0F}   \let\epsilon\myepsilon
\DeclareMathSymbol{\myzeta}{\mathord}{cmletters}{"10}      \let\zeta\myzeta
\DeclareMathSymbol{\myeta}{\mathord}{cmletters}{"11}       \let\eta\myeta
\DeclareMathSymbol{\mytheta}{\mathord}{cmletters}{"12}     \let\theta\mytheta
\DeclareMathSymbol{\myiota}{\mathord}{cmletters}{"13}      \let\iota\myiota
\DeclareMathSymbol{\mykappa}{\mathord}{cmletters}{"14}     \let\kappa\mykappa
\DeclareMathSymbol{\mylambda}{\mathord}{cmletters}{"15}    \let\lambda\mylambda
\DeclareMathSymbol{\mymu}{\mathord}{cmletters}{"16}        \let\mu\mymu
\DeclareMathSymbol{\mynu}{\mathord}{cmletters}{"17}        \let\nu\mynu
\DeclareMathSymbol{\myxi}{\mathord}{cmletters}{"18}        \let\xi\myxi
\DeclareMathSymbol{\mypi}{\mathord}{cmletters}{"19}        \let\pi\mypi
\DeclareMathSymbol{\myrho}{\mathord}{cmletters}{"1A}       \let\rho\myrho
\DeclareMathSymbol{\mysigma}{\mathord}{cmletters}{"1B}     \let\sigma\mysigma
\DeclareMathSymbol{\mytau}{\mathord}{cmletters}{"1C}       \let\tau\mytau
\DeclareMathSymbol{\myupsilon}{\mathord}{cmletters}{"1D}   \let\upsilon\myupsilon
\DeclareMathSymbol{\myphi}{\mathord}{cmletters}{"1E}       \let\phi\myphi
\DeclareMathSymbol{\mychi}{\mathord}{cmletters}{"1F}       \let\chi\mychi
\DeclareMathSymbol{\mypsi}{\mathord}{cmletters}{"20}       \let\psi\mypsi
\DeclareMathSymbol{\myomega}{\mathord}{cmletters}{"21}     \let\omega\myomega
\DeclareMathSymbol{\myvarepsilon}{\mathord}{cmletters}{"22}\let\varepsilon\myvarepsilon
\DeclareMathSymbol{\myvartheta}{\mathord}{cmletters}{"23}  \let\vartheta\myvartheta
\DeclareMathSymbol{\myvarpi}{\mathord}{cmletters}{"24}     \let\varpi\myvarpi
\DeclareMathSymbol{\myvarrho}{\mathord}{cmletters}{"25}    \let\varrho\myvarrho
\DeclareMathSymbol{\myvarsigma}{\mathord}{cmletters}{"26}  \let\varsigma\myvarsigma
\DeclareMathSymbol{\myvarphi}{\mathord}{cmletters}{"27}    \let\varphi\myvarphi
\theoremstyle{plain}
\newtheorem{thm}{Theorem}[section]
\newtheorem{prop}[thm]{Proposition}
\theoremstyle{definition}
\newtheorem{df}[thm]{Definition}
\newtheorem{rem}[thm]{Remark}
\DeclareMathOperator{\Gal}{Gal}
\DeclareMathOperator{\Spec}{Spec}
\DeclareMathOperator{\Div}{Div}
\DeclareMathOperator{\Cl}{Cl}
\DeclareMathOperator{\Pic}{Pic}
\def\0{{\bf 0}}
\def\A{{\mathbb A}}
\def\C{{\mathbb C}}
\def\F{{\mathbb F}}
\def\N{{\mathbb N}}
\def\P{{\mathbb P}}
\def\Q{{\mathbb Q}}
\def\R{{\mathbb R}}
\def\Z{{\mathbb Z}}
\def\cI{{\mathcal I}}
\def\cL{{\mathcal L}}
\def\cM{{\mathcal M}}
\def\cO{{\mathcal O}}
\def\tcO{{\widetilde{\mathcal O}}}
\def\cP{{\mathcal P}}
\def\cR{{\mathcal R}}
\def\cT{{\mathcal T}}
\def\fm{{\mathfrak m}}
\def\fp{{\mathfrak p}}
\def\Fun{{\F_1}}
\def\Funsq{{\F_{1^2}}}
\def\Funn{{\F_{1^n}}}
\def\Funinf{{\F_{1^\infty}}}
\def\1{\textbf{1}}
\def\overz{\overline{\Spec\Z}}
\def\overX{\overline{X}}
\def\blanc{-}
\def\bp{{\mathcal{B}lpr}}
\def\bpspaces{{\mathcal{L}\!oc\bp \mathcal{S}\!p}}
\def\Rings{{\mathcal{R}\textit{ings}}}
\def\={\equiv}
\def\n={\equiv\hspace{-10,5pt}/\hspace{3,5pt}}
\def\top{{\textup{top}}}
\def\arak{{\textup{}}}
\def\eff{{\textup{eff}}}
\def\toptimes{\times^\top}
\DeclareMathOperator{\BSch}{Sch_\Fun}
\newcommand{\arincl}[1]{\ar@{ >->}@<-0,0ex>#1} 
\newcommand{\norm}[1]{\left| #1 \right|}
\newcommand{\gen}[1]{\langle #1 \rangle}
\newcommand{\bpquot}[2]{#1\!\sslash\!#2}
\newcommand{\bpgenquot}[2]{#1\!\sslash\!\gen{#2}}
\def\Re{\textup{Re}}
\begin{document}

\title{Blueprints - towards absolute arithmetic?}
\author{Oliver Lorscheid}
\date{}
\address{Instituto Nacional de Matem\'atica Pura e Aplicada, Estrada Dona Castorina 110, 22460-320 Rio de Janeiro, Brazil.}
\email{lorschei@impa.br}

\begin{abstract}
 One of the driving motivations to develop $\Fun$-geometry is the hope to translate Weil's proof of the Riemann hypothesis from positive characteristics to number fields, which might result in a proof of the classical Riemann hypothesis. The underlying idea is that the spectrum of $\Z$ should find an interpretation as a curve over $\Fun$, which has a completion $\overz$ analogous to a curve over a finite field. The hope is that intersection theory for divisors on the arithmetic surface $\overz\times\overz$ will allow to mimic Weil's proof.

 It turns out that it is possible to define an object $\overz$ from the viewpoint of blueprints that has certain properties, which come close to the properties of its analogs in positive characteristic. This shall be explained in the following note, which is a summary of a talk given at the Max Planck Institute in March, 2012. 
\end{abstract}

\maketitle


\section*{Introduction}
\label{intro}

\noindent
For a not yet systematically understood reason, many arithmetic laws have (conjectural) analogues for function fields and number fields. While in the function field case, these laws often have a conceptual explanation by means of a geometric interpretation, methods from algebraic geometry break down in the number field case. The mathematical area of $\Fun$-geometry can be understood as a program to develop a geometric language that allows to transfer the geometric methods from function fields to number fields.

A central problem of this kind, which lacks a proof in the number field case, is the Riemann hypothesis. The Riemann zeta function is defined as the Riemann sum $\zeta(s)=\sum_{n\geq1}n^{-1}$, or, equivalently, as the Euler product $\prod_{p\text{ prime}}(1-p^{-s})^{-1}$ (these expressions converge for $\Re(s)>1$, but can be continued to meromorphic function on $\C$). A more symmetric expression with respect to its functional equation is given by the \emph{completed zeta function}
\[
 \zeta^\ast(s) \quad = \quad \underbrace{\pi^{-s/2}\Gamma(\frac s2)}_{\zeta_\infty(s)} \ \cdot \ \prod_{p\text{ prime}} \ \underbrace{\frac{1}{1-p^{-s}}}_{\zeta_p(s)} \hspace{2cm} (\text{for }\Re(s)>1)
\]
where we call the factor $\zeta_p(s)$ the \emph{local zeta factor at $p$} for $p\leq\infty$. The meromorphic continuation of $\zeta^\ast(s)$ to $\C$ satisfies $\zeta^\ast(s)=\zeta^\ast(1-s)$. The fundamental conjecture is the

\medskip\noindent
\textbf{Riemann hypothesis:} If $\zeta^\ast(s)=0$, then $\Re(s)=1/2$.
\smallskip

The analogueous statement for the function field of a curve $X$ over a finite fields has been proven by Andr\'e Weil more than seventy years ago. Weyl's proof uses intersection theory for the self-product $X\times X$ and the Lefschetz fix-point formula for the absolute Frobenius action on $X$ resp.\ $X\times X$. There were several attempts to translate the geometric methods of this proofs into arithmetic arguments that would apply for number fields as well, but only with partial success so far.

A different approach, primarily due to Grothendieck, is the development of a theory of (mixed) motives, but such a theory relies on the solution of some fundamental problems. In particular, Deninger formulates in \cite{Deninger92} a conjectural formalism for a \emph{big site $\cT$} of motives that should contain a compactification $\overz=\Spec\Z\cup\{\infty\}$ of the arithmetic line, and he conjectured that the formula
\[
 \zeta^\ast(s) \quad = \quad \frac{\det_\infty\Bigl(\frac 1{2\pi}(s-\Theta)\Bigl| H^1(\overline{\Spec\Z},\cO_\cT)\Bigr.\Bigr)}{\det_\infty\Bigl(\frac 1{2\pi}(s-\Theta)\Bigl| H^0(\overline{\Spec\Z},\cO_\cT)\Bigr.\Bigr) \ \cdot \ \det_\infty\Bigl(\frac 1{2\pi}(s-\Theta)\Bigl| H^2(\overline{\Spec\Z},\cO_\cT)\Bigr.\Bigr)} 
\]
holds true where $\det_\infty$ denotes the regularized determinant, $\Theta$ is an endofunctor on $\cT$ and $H^i(\blanc,\cO_\cT)$ is a certain proposed cohomology. This description combines with Kurokawa's work on multiple zeta functions (\cite{Kurokawa92}) to the hope that there are motives $h^0$ (\emph{``the absolute point''}), $h^1$ and $h^2$ (\emph{``the absolute Tate motive''}) with zeta functions
$$ \zeta_{h^w}(s) \ = \ \det{}_\infty\Bigl(\frac 1{2\pi}(s-\Theta)\Bigl| H^w(\overline{\Spec\Z},\cO_\cT)\Bigr.\Bigr) $$

The viewpoint of $\Fun$-geometry is that $\overz$ should be a curve over the elusive field $\Fun$ with one element\footnote{The origins of this idea are not completely clear to me. The first mentioning in literature is in the lecture notes \cite{Manin95} of Yuri Manin (the lectures were held in 1991 and 1992). Alexander Smirnov told me that he that this idea occured to him already in 1985.}. A good theory of geometry over $\Fun$ might eventually allow to transfer Weil's original proof to the number field case. Manin tied up this viewpoint with the motivic approach: he proposed in \cite{Manin95} to interpret the absolute point $h^0$ as $\Spec\Fun$ and the absolute Tate motive $h^2$ as the affine line over $\Fun$.

Soul\'e made the first attempt to define a class of objects that should be thought of as varieties over $\Fun$ and he defines the zeta function of a variety over $\Fun$ (cf.\ \cite{Soule04}). Indeed, his candidates for the absolute point $\Spec\Fun$ has the zeta function $\zeta_{\Spec\Fun}(s)=s$ and the zeta function of the affine line $\A^1_\Fun$ over $\Fun$ has the zeta function $\zeta_{\Spec\Fun}(s)=s-1$, which fits Deninger's formalism up to a factor $2\pi$. 

However, the cohomological interpretation $\det_\infty\bigl((s-\Theta)/(2\pi)\bigl| H^i(\overline{\Spec\Z},\cO_\cT)\big.\bigr)$ of the motivic zeta function is still mysterious. The first step is to make sense of the object $\overz$, and, indeed, there were several attempts to do so, for instance by Durov (cf.\ \cite{Durov07}) and Haran (cf.\ \cite{Haran07}). To transfer Weil's proof of the Riemann hypothesis for function fields one needs to consider intersection theory on the arithmetic surface $\overz\times_\Fun\overz$. But in the approaches towards $\Fun$-geometry so far\footnote{During the time of writing the preprint \cite{Takagi12} appeared, in which Takagi introduces a new candidate for $\overz$. This approach seems to be close to the one explained in this note, but I do not yet understand the connection clearly.}---even if one considers only the affine part $\Spec\Z$ of $\overz$---the self-product $\Spec\Z\times_\Fun\Spec\Z$ is either isomorphic to $\Spec\Z$ itself or infinite-dimensional over $\Z$.

Aim of this note is to present a definition of $\overz$ as a blueprinted space (see \cite{blueprints1} and \cite{blueprints2}), which is close to Arakelov geometry and whose self-product over $\Fun$ (or better over $\Funsq$, as explained in the following) is $2$-dimensional.

\section{The naive definition of $\overz$}
\label{section: naive definition of spec z}

\noindent
In analogy to curves over finite fields, one might expect the compactification of $\Spec\Z$ to be a sober topological space $X$ that consists of a unique generic point $\eta$ and a closed point $p$ for every (non-trivial) place $\norm\ _p$ of the ``function field'' $\Q$ of $X=\overz$. In other words, a closed point is either a (finite) prime $p<\infty$ or the archimedean place $p=\infty$, which we will also call the infinite prime. The closed sets of $X$ are the finite sets of places $\{p_1,\dotsc,p_n\}$ and $X$. Further, there should be a structure sheaf $\cO_X$, which associates to the open set $U=X-\{p_1,\dotsc,p_n\}$ the following set of rational functions:
\[
 \cO_X \bigl( U \bigr) \quad = \quad \left\{ \ \frac ab \in\Q \ \left| \ \norm{\frac ab}_q\leq 1\text{ for all }q\notin\{p_1,\dotsc,p_n\}\ \right.\right\}.
\]
As a consequence, the global sections are $\Gamma(X,\cO_X)=\cO_X(X)=\{0,\pm 1\}$, which should be thought of as the constants of $\overz$. The stalks of $\cO_X$ are 
\[
 \cO_{X,p} \quad = \quad \left\{ \ \frac ab \in\Q \ \left| \ \norm{\frac ab}_p\leq 1\ \right.\right\},
\]
with ``maximal ideals''
\[
 \fm_p \quad = \quad \left\{ \ \frac ab \in\Q \ \left| \ \norm{\frac ab}_p< 1\ \right.\right\}
\]
for every prime $p\leq\infty$. One observes that $X$ is indeed an extension of the scheme $\Spec\Z$, i.e.\ the restriction of $X$ to $U=X-\{\infty\}$ can be identified with $\Spec\Z$. The problem with this definition is that the sets $\cO_X\bigl( X-\{p_1,\dotsc,p_n\} \bigr)$ are not sub\emph{rings} of $\Q$ if $\infty\notin\{p_1,\dotsc,p_n\}$, and neither is the stalk $\cO_{X,\infty}=[-1,1]\cap\Q$ at infinity. 

However, all of these sets are \emph{monoids}, by which we mean in this note (multiplicatively written) commutative semigroups with identity $1$ and absorbing element $0$. One of the tasks of an $\Fun$-geometry is to give this naive definition of $\overz$ a precise meaning, i.e.\ making it a geometric object in a meaningful category. We will present one possible approach via blueprints in the following.

\section{Blueprints}
\label{section: blueprints}

\noindent
 The definition of a blueprint was initially motivated by Jacques Tits' idea of descending Chevalley groups to $\Fun$: there was a need to go beyond Deitmar's $\Fun$-geometry attached to monoids by including a relation on the formal sums in the elements of a monoid. 

 Meanwhile it became clear that blueprints not only yield a satisfactory answer to Tits' problem, with applications to (idempotent and tropical) semirings (cf.\ \cite{blueprints2}), but provide the natural background to understand certain aspects of geometry: buildings and their apartments; canonical bases and total positivity; cluster algebras; moduli of quiver representations and quiver Grassmannians. In this note, we restrict our attention to the definition of $\Spec\Z$ as a blueprinted space.

 We recall the definition of a blueprint. We will follow the conventions of \cite{blueprints2}, i.e.\ we mean by a blueprint what was called a \emph{proper blueprint with $0$} in \cite{blueprints1}. 

\begin{df}
 A \emph{blueprint $B$} is a monoid $A$ together with a \emph{pre-addition} $\cR$, i.e.\ $\cR$ is an equivalence relation on the semiring $\N[A]=\{\sum a_i|a_i\in A\}$ of finite formal sums of elements of $A$ that satisfies the following axioms (where we write $\sum a_i\=\sum b_j$ whenever $(\sum a_i,\sum b_j)\in\cR$):
\begin{enumerate}
 \item\label{ax1} If $\sum a_i\=\sum b_j$ and $\sum c_k\=\sum d_l$, then $\sum a_i+\sum c_k\=\sum b_j+\sum d_l$ and $\sum a_ic_k\=\sum b_jd_l$.
 \item\label{ax2} The absorbing element $0$ of $A$ is identified with the zero of $\N[A]$, i.e.\ $0\=(\text{empty sum})$.
 \item\label{ax3} If $a\= b$, then $a=b$ (as elements in $A$).
\end{enumerate}
 A \emph{morphism $f:B_1\to B_2$ between blueprints} is a multiplicative map $f:A_1\to A_2$ between the underlying monoids of $B_1$ and $B_2$, respectively, with $f(0)=0$ and $f(1)=1$ such that for every relation $\sum a_i\=\sum b_j$ in the pre-addition $\cR_1$ of $B_1$, the pre-addition $\cR_2$ of $B_2$ contains the relation $\sum f(a_i)\=\sum f(b_j)$. Let $\bp$ be the category of blueprints.
\end{df}

In the following, we write $B=\bpquot A\cR$ for a blueprint $B$ with underlying monoid $A$ and pre-addition $\cR$. We adopt the conventions used for rings: we identify $B$ with the underlying monoid $A$ and write $a\in B$ or $S\subset B$ when we mean $a\in A$ or $S\subset A$, respectively. Further, we think of a relation $\sum a_i\=\sum b_j$ as an equality that holds in $B$ (without the elements $\sum a_i$ and $\sum b_j$ being defined, in general). Given a set $S$ of relations, there is a smallest equivalence relation $\cR$ on $\N[A]$ that contains $S$ and satisfies \eqref{ax1} and \eqref{ax2}. If $\cR$ satisfies also \eqref{ax3}, then we say that $\cR$ is the pre-addition generated by $S$, and we write $\cR=\gen S$. In particular, every monoid $A$ has a smallest pre-addition $\cR=\gen\emptyset$.

\subsection{Relation to rings}\label{subsection: ralation to rings}
The idea behind the definition of a blueprint is that it is a blueprint of a ring (in the literal sense): given a blueprint $B=\bpquot A\cR$, one can construct the ring $B_\Z^+=\Z[A]/I(\cR)$ where $I(\cR)$ is the ideal $\{\sum a_i-\sum b_j\in \Z[A]|\sum a_i\=\sum b_j \text{ in }\cR\}$. This association is functorial by extending a blueprint morphism $f:B_1\to B_2$ linearly to a ring homomorphism $f_\Z^+:B_{1,\Z}^+\to B_{2,\Z}^+$, i.e. we obtain a functor $(\blanc)^+_\Z:\bp\to\Rings$ from blueprints to rings.

On the other hand, rings can be seen as blueprints: given a (commutative and unital) ring $R$, we can define the blueprint $B=\bpquot A\cR$ where $A$ is the underlying multiplicative monoid of $R$ and $\cR=\{\sum a_i\=\sum b_j|\sum a_i=\sum b_j\text{ in }R\}$. Under this identification, ring homomorphisms are nothing else than blueprint morphisms, i.e.\ we obtain a full embedding $\iota_\cR:\Rings\to\bp$. This allows us to identify rings with a certain kind of blueprints, and we can view blueprints as a generalization of rings. Accordingly, we call blueprints in the essential image of $\iota_\cR$ rings.

\subsection{Relation to monoids}\label{subsection: relation to monoids}
Another important class of blueprints are monoids. Namely, a monoid $A$ defines the blueprint $B=\bpgenquot A\emptyset$. This defines a full embedding $\iota_\cM:\cM\to\bp$ of the category $\cM$ of monoids into $\bp$. This justifies that we may call blueprints in the essential image of $\iota_\cM$ monoids and that we identify in this case $B$ and $A$.

\subsection{Cyclotomic field extensions and the archimedean valuation blueprint}\label{subsection: valuation blueprints}
We give some other examples, which are of interest for the purposes of this text. The initial object in $\bp$ is the monoid $\Fun:=\{0,1\}$, the so-called \emph{field with one element}. More general, we define the \emph{cyclotomic field extension $\Funn$ of $\Fun$} as the blueprint $B=\bpquot A\cR$ where $A=\{0\}\cup\mu_n$ is the union of $0$ with a cyclic group $\mu_n=\{\zeta_n^i|i=1,\dotsc,n\}$ of order $n$ with generator $\zeta_n$ and where $\cR$ is generated by the relations $\sum_{i=0}^{n/d}\zeta_n^{di}\=0$ for every divisor $d$ of $n$. The associated ring of $\Funn$ is the ring $\Z[\zeta_n]$ of integers of the cyclotomic field extension $\Q[\zeta_n]$ of $\Q$ that is generated by the $n$-th roots of $1$. Note that this breaks with the convention of $\Fun$-literature that $\Funn$ should be a monoid and its associated ring should be isomorphic to $\Z[T]/(T^n-1)$.

Of particular importance for us will be the blueprint $\Funsq=\bpgenquot{\{0,\pm1\}}{1+(-1)\=0}$, which we can identify with $\Gamma(X,\cO_X)$. Consequently, we can think of $X=\overz$ as a curve defined over $\Funsq$, which is an alteration of the prominent viewpoint in $\Fun$-geometry that $\overz$ should be a curve over $\Fun=\{0,1\}$. This will be of importance for the definition of cohomology (cf.\ Section \ref{section: cohomology}).

The stalk $\tcO_{X,\infty}=[-1,1]\cap\Q$ has a natural blueprint structure in terms of the pre-addition $\cR_\infty=\{\sum a_i\=\sum b_j|\sum a_i=\sum b_j\text{ in }\Q\}$. We will see, however, that this blueprint does not fit our purpose of making $\overz$ a well-behaved geometric object (cf.\ Section \ref{section: spec z as a locally bleuprinted space}).

\section{Locally blueprinted spaces}
\label{section: locally blueprinted spaces}

\noindent
Let $B=\bpquot A\cR$ be a blueprint. An \emph{ideal} of $B$ is a subset $I$ satisfying that $IB\subset I$ and that for every additive relation of the form $\sum a_i+ c\=\sum b_j$ in $B$ with $a_i,b_j\in I$, we have $c\in I$. The relation $0\=\text{(empty sum)}$ implies that every ideal $I$ contains $0$.

A \emph{multiplicative set} is a subset $S$ of $B$ that is closed under multiplication and contains $1$. An ideal $\fp$ of $B$ is a \emph{prime ideal} if its complement in $B$ is a multiplicative set. A \emph{maximal ideal} is an ideal that is maximal for the inclusion relation and not equal to $B$ itself. A blueprint is \emph{local} if it has a unique maximal ideal. A morphism $f:B_1\to B_2$ between local blueprints is \emph{local} if it maps the maximal ideal of $B_1$ to the maximal ideal of $B_2$.

Note that if $B$ is a ring, all the above definitions specialize to the corresponding definitions for rings. The same is true for monoids. The following well-known facts for rings generalize to blueprints: maximal ideals are prime ideals; if $B$ is a local blueprint, then its maximal ideal is the complement of the group of units, i.e. the set of elements with a multiplicative inverse.

A \emph{blueprinted space} is a topological space $X$ together with a sheaf $\cO_X$ in $\bp$. A morphism of blueprinted spaces is a continuous map together with a sheaf morphism. Since the category $\bp$ contains directed limits, the stalks $\cO_{X,x}$ in points $x\in X$ exist, and a morphism of blueprinted spaces induces morphisms between stalks. A \emph{locally blueprinted space} is a blueprinted space whose stalks $\cO_{X,x}$ are local blueprints with maximal ideal $\fm_x$ for all $x\in X$. A \emph{local morphism} between locally blueprinted spaces is a morphism of blueprinted spaces that induces local morphisms of blueprints between all stalks. We denote the resulting category by $\bpspaces$.

Note that we can define the residue field of a point $x$ of a locally blueprinted space $X$ as $\kappa(x)=\cO_{X,x}/\fm_x$. A local morphism of locally blueprinted spaces induces morphisms between residue fields.

The \emph{spectrum of a blueprint $B$} is defined analogously to the case of rings or monoids: $\Spec B$ is the locally blueprinted space whose underlying set $X$ is the set of all prime ideals of $B$, endowed with the Zariski topology, and whose structure sheaf $\cO_X$ consists of localizations of $B$. A \emph{blue scheme} is a locally blueprinted space that is locally isomorphic to spectra of blueprints. We denote the full subcategory of $\bpspaces$ whose objects are blue schemes by $\BSch$.

However, our definition of $\overz$ will not be a blue scheme, but merely a locally blueprinted space. An important effect in the category of blueprints is that colimits and tensor products of rings inside $\bp$ is, in general not a ring. This leads to a simpler structure of fibre products of blue schemes, which coincide with the fibre products in $\bpspaces$. More precisely, we have:

\begin{thm}\label{thm: fibre products}
 The category $\bpspaces$ has fibre products. The fibre product $X\times _S Y$ is naturally a subset of the topological product $X\toptimes Y$, and it carries the subspace topology. In the case of $S=\Spec\Funsq$, it has the explicit description
 \[
  X\times_\Funsq Y \qquad = \qquad \left\{ \ (x,y)\in X\toptimes Y \ \left| \ \substack{\text{there are a field }k\text{ and blueprint}\\ \text{morphisms } \kappa(x)\to k\text{ and }\kappa(y)\to k} \ \right.\right\}.
 \]
 If $X$, $Y$ and $S$ are blue schemes, then the fibre product $X\times_S Y$ in $\bpspaces$ coincides with the fibre product in $\BSch$. In particular, $X\times_S Y$ is a blue scheme.
\end{thm}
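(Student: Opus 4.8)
The plan is to construct the fibre product $X \times_S Y$ by gluing, reducing everything to the affine case, and then verifying the universal property directly at the level of underlying sets, topologies, and structure sheaves. First I would treat the affine case: for blueprints $B_1 \leftarrow C \rightarrow B_2$, I claim $\Spec(B_1 \otimes_C B_2)$ represents the fibre product in $\bpspaces$, where $B_1 \otimes_C B_2$ is the pushout of blueprints (which exists because $\bp$ is cocomplete — it has the relevant colimits, as already used in the discussion preceding the theorem). The key point is that a morphism from any locally blueprinted space $T$ to $\Spec B$ is the same as a blueprint morphism $B \to \cO_T(T)$, i.e.\ $\Hom_{\bpspaces}(T, \Spec B) \cong \Hom_{\bp}(B, \Gamma(T, \cO_T))$; this is the blueprint analogue of the standard adjunction for affine schemes, and it should follow formally from the construction of $\Spec$ via localizations together with the local-morphism condition, exactly as in the ring and monoid cases. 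Granting this adjunction, representability of the affine fibre product by $\Spec(B_1\otimes_C B_2)$ is immediate from the universal property of the pushout in $\bp$.

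Next I would globalize. Given arbitrary objects $X, Y, S$ in $\bpspaces$ with morphisms to $S$, I would cover $S$ by affine opens $\Spec C_\alpha$, then cover the preimages in $X$ and $Y$ by affine opens $\Spec B_{1,\beta}$, $\Spec B_{2,\gamma}$ mapping into the respective $\Spec C_\alpha$, form the affine pieces $\Spec(B_{1,\beta}\otimes_{C_\alpha} B_{2,\gamma})$, and glue them along the overlaps. The gluing data is forced by the universal property and the affine case; the cocycle condition is routine. This produces a locally blueprinted space $P$ together with projections to $X$ and $Y$ over $S$, and the universal property of $P$ follows by checking it locally, where it reduces to the affine statement. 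The identification of $P$ as a subset of $X \toptimes Y$ with the subspace topology is the content of the next step.

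For the set-theoretic and topological description, I would use the adjunction in the case $T = \Spec k$ for $k$ a field (i.e.\ a blueprint that is a field): a $k$-point of $P$ over a pair of points $x \in X$, $y \in Y$ lying over a common $s \in S$ corresponds to a pair of blueprint morphisms $\kappa(x) \to k$, $\kappa(y) \to k$ compatible over $\kappa(s)$. When $S = \Spec\Funsq$, the compatibility over $\kappa(s)$ is automatic since $\Funsq$ maps to every field (one checks $1 + (-1) \= 0$ holds in any field), which yields precisely the displayed formula. The underlying points of $P$ are then exactly such pairs, so the natural map $P \to X \toptimes Y$ is injective, and comparing the Zariski topology on affine pieces $\Spec(B_1 \otimes_C B_2)$ with the product topology shows it is a subspace. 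Finally, when $X$, $Y$, $S$ are blue schemes, each affine piece $\Spec(B_1 \otimes_C B_2)$ is by definition a blue scheme, so $P$ is a blue scheme; and since $\BSch$ is a full subcategory of $\bpspaces$, the fibre product computed in $\bpspaces$ automatically serves as the fibre product in $\BSch$.

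The main obstacle I expect is the affine adjunction $\Hom_{\bpspaces}(T, \Spec B) \cong \Hom_{\bp}(B, \Gamma(T,\cO_T))$, specifically the surjectivity and the verification that the induced map is a \emph{local} morphism on stalks: one must show that a blueprint map $B \to \Gamma(T, \cO_T)$ determines a continuous map $T \to \Spec B$ by pulling back prime ideals, and that the sheaf map it induces is local at every point. Subtleties peculiar to blueprints — the behaviour of localization, the fact that $\Gamma$ of a colimit need not be the colimit of $\Gamma$'s, and the precise definition of ideal and prime ideal given above — have to be handled with care here, though the arguments parallel the well-documented ring and monoid cases. Everything downstream (gluing, the point-set formula, the comparison with $\BSch$) is then formal.
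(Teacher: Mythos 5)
The central gap is in your very first step: you construct $X\times_S Y$ by covering $X$, $Y$ and $S$ with affine opens $\Spec B$ and gluing the affine fibre products. That strategy only applies to blue schemes, i.e.\ to objects that are \emph{locally} spectra of blueprints. But the first and main assertion of the theorem is that the whole category $\bpspaces$ of locally blueprinted spaces has fibre products, and a general object of $\bpspaces$ is just a topological space with a sheaf of blueprints with local stalks --- it need not admit any affine open cover (indeed, the paper's $\overz$ is introduced precisely as a locally blueprinted space that is \emph{not} a blue scheme). So your construction never gets off the ground for the objects the theorem is really about. What is needed instead is a direct, pointwise construction in the spirit of fibre products of locally ringed spaces: define the underlying set as the pairs $(x,y)\in X\toptimes Y$ lying over a common $s\in S$ for which the stalks $\cO_{X,x}$ and $\cO_{Y,y}$ admit a common local target over $\cO_{S,s}$ (equivalently, for which the residue fields $\kappa(x)$ and $\kappa(y)$ map compatibly to a common blueprint of the appropriate kind), give it the subspace topology, build the structure sheaf from stalkwise tensor products, and verify the universal property directly. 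Your affine adjunction $\Hom_{\bpspaces}(T,\Spec B)\cong\Hom_\bp(B,\Gamma(T,\cO_T))$ is still a useful ingredient --- it is exactly what is needed for the final comparison with $\BSch$ --- but it cannot substitute for the general construction.

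A second, related gap: you assert that the natural map from the fibre product to $X\toptimes Y$ is injective and that checking this is a matter of ``comparing the Zariski topology on affine pieces with the product topology.'' This is not routine; it is the point where blueprints genuinely diverge from rings. For locally ringed spaces the analogous map is badly non-injective (over a single pair $(x,y)$ one finds all primes of $\kappa(x)\otimes_{\kappa(s)}\kappa(y)$, e.g.\ $\Spec(\C\otimes_{\Q}\C)$ is infinite), so any proof of injectivity must invoke a blueprint-specific fact, namely that a prime ideal of a tensor product of blueprints is determined by its preimages in the two factors --- this is essentially the monoid-theoretic statement underlying the well-known identity $\Spec(A_1\otimes_{A_0}A_2)=\Spec A_1\times\Spec A_2$ in Deitmar's $\Fun$-geometry, corrected by the conditions that the pre-addition imposes. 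That fact is what produces the displayed description of $X\times_\Funsq Y$ (where, as you correctly note, compatibility over $\kappa(s)$ is automatic because $\Funsq$ maps uniquely to every field); without it, neither the subset claim nor the explicit formula is established.
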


\section{$\overz$ as a locally blueprinted space}
\label{section: spec z as a locally bleuprinted space}

\noindent
 We investigate the spectrum of the blueprint $\tcO_{X,\infty}$ as defined in Section \ref{subsection: valuation blueprints}. The multiplicative structure of $\tcO_{X,\infty}=\bpquot{A_\infty}{\cR_\infty}$ for the monoid $A_\infty=[-1,1]\cap \Q$ and $\cR_\infty$ as in Section \ref{subsection: valuation blueprints} implies that if an ideal $I$ of $\tcO_{X,\infty}$ contains an element $a$, then it contains $[-a,a]\cap \Q=aA_\infty$. Consequently, if $I$ contains any non-zero element $a$, then it contains also elements $a_1,\dotsc,a_n$ such that $\sum a_i\=1$. The additive axiom for ideals implies that $1\in I$. Thus $(0)$ and $(1)$ are the only ideals of $\tcO_{\tilde X,\infty}$, and its spectrum consists of one single point $\eta=(0)$.

 In particular, the set $\fm_\infty=\{a\in\Q|\norm a_\infty<1\}$ is not an ideal, which prevents $\tcO_{X,\infty}$ to be a candidate for the stalk of $\overz$ at infinity. However, if we endow the monoid $[-1,1]\cap\Q$ with a weaker pre-addition, then we obtain a useful blueprint. Namely, the set of ideals of the blueprint $\cO_{X,x}=\bpgenquot{A_\infty}{1+(-1)\=0}$ is, by the above considerations,
 \[ 
  \cI_\infty \quad = \quad \{ \ (-a,a)\cap\Q \ | \ a\in(0,1] \ \} \quad \amalg \quad \{ \ [-a,a]\cap\Q \ | \ a\in[0,1]\cap\Q \ \},
 \]
 and the prime ideals of $\cO_{X,x}$ are $(0)$ and $\fm_x$, in analogy to the spectrum of a discrete valuation ring.

 This difference of the spectra of $\tcO_{X,\infty}$ and $\cO_{X,\infty}$ does not occur for finite primes $p$. Namely, the set of ideals of the blueprint $\cO_{X,p}=\bpgenquot{A_p}{1+(-1)\=0}$ is
 \[
  \cI_p \quad = \quad \{ \ (p^i) \ | \ i\in\N\cup\{\infty\}\}
 \]
 where $(p^0)=\cO_{X,p}$, $(p^1)=\fm_p$ and $(p^\infty)=(0)$. This set is identical with the set of ideals of the ring $\tcO_{X,p}=\Z_{(p)}$, which is the stalk of $\Spec\Z$ at $p$. The spectrum of $\cO_{X,p}$ consists of the two points $(0)$ and $\fm_p$ as desired. 

 We can define a sheaf $\cO_X$ on $X=\overz$ that yields $\cO_{X,p}$ as the stalk for any (finite or infinite) prime $p$. Namely, we put
 \[
  \cO_X\bigl( X-\{p_1,\dotsc,p_n\} \bigr) \quad = \quad \bpgenquot{\Bigl\{ \ \frac ab \in\Q \ \Bigl| \ \norm{\frac ab}_q\leq 1\text{ for all }q\notin\{p_1,\dotsc,p_n\}\ \Bigr\}}{1+(-1)\=0}.
 \]
 This makes $X$ a locally blueprinted space over $\Funsq$, i.e. $X$ comes together with a morphism $X\to\Spec\Funsq$. The underlying topological space of $X$ is Noetherian and has dimension $1$. Its structure sheaf is coherent in the sense that $X$ has an open covering $\{U_i\}$ such that for every $V\subset U_i$, the restriction map $\cO_X(U_i)\to\cO_X(V)$ is a localization. 

 Since for every prime $p$, the residue field $\kappa(p)$ can embedded into fields of any characteristic, Theorem \ref{thm: fibre products} implies the following.

 \begin{prop}
  The ``arithmetic surface'' $X\times_\Funsq X$ is a topological space of dimension $2$. \qed
 \end{prop}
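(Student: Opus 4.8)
The plan is to compute the underlying topological space of $X \times_\Funsq X$ by combining Theorem \ref{thm: fibre products} with the explicit description of the points of $X$. Recall that $X = \overz$ has, as a set, a generic point $\eta$ (with residue field $\Q$) together with one closed point for each prime $p \le \infty$; the residue field at a finite prime $p$ is $\F_p$, while the residue field at $p = \infty$ is $\kappa(\infty) = \cO_{X,\infty}/\fm_\infty$, which by the computation above is a monoid-like blueprint whose underlying monoid is $(\{-1,1\} \cup \{a \in \Q : |a|_\infty < 1\})/\sim$, i.e. morally the ``residue field at the archimedean place.'' The key structural input is the sentence already granted in the excerpt: for every prime $p$ (finite or infinite) the residue field $\kappa(p)$ admits a blueprint morphism into a field of any characteristic. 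Concretely, $\F_p$ embeds into $\overline{\F_p}$ but also receives no map from fields of other characteristic, whereas $\Q = \kappa(\eta)$ and $\kappa(\infty)$ map into fields of every characteristic (for $\kappa(\infty)$ one uses that it is generated by torsion elements $\pm 1$ together with ``infinitesimal'' elements that must go to $0$, so it maps to $\Funsq$ and hence into any field).

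First I would enumerate which pairs $(x,y) \in X \toptimes X$ survive in the fibre product. By Theorem \ref{thm: fibre products}, $(x,y)$ lies in $X \times_\Funsq X$ iff there is a field $k$ and blueprint morphisms $\kappa(x) \to k$ and $\kappa(y) \to k$. If at least one of $x,y$ is $\eta$ or $\infty$, then the corresponding residue field maps to a field of arbitrary characteristic, so one can always choose $k$ to accommodate the other factor: all such pairs survive. If both $x = p$ and $y = q$ are finite primes, then a common field $k$ must have characteristic dividing both $p$ and $q$, which forces $p = q$; so among finite$\times$finite pairs only the diagonal $\{(p,p) : p < \infty\}$ survives. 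Thus as a set
\[
  X \times_\Funsq X \;=\; \bigl(\{\eta,\infty\} \times X\bigr) \;\cup\; \bigl(X \times \{\eta,\infty\}\bigr) \;\cup\; \{(p,p) : p<\infty\},
\]
a subset of $X \toptimes X$ carrying the subspace topology.

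Next I would read off the dimension. The subspace $\{\eta\} \times X \cong X$ already has Krull dimension $1$ (a chain $\{\eta\}\times\{\eta\} \subsetneq \{\eta\}\times X$, or rather its closure), so $\dim(X\times_\Funsq X) \ge 1$; to get dimension exactly $2$ I would exhibit a chain of length $2$ of irreducible closed subsets, for instance the closure of the generic point $(\eta,\eta)$ (which is all of $X\times_\Funsq X$, since $\eta$ is dense in $X$ and the product of dense sets is dense in the subspace topology), strictly containing the closure of a point like $(\eta,p)$ for a fixed finite prime $p$ (an infinite, hence positive-dimensional, irreducible closed set: it is $\overline{\{\eta\}}\times\{p\} \cong X \times\{p\}$), strictly containing a single closed point $(p,p)$. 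That gives a chain of three distinct irreducible closed subsets, hence $\dim \ge 2$. For the reverse inequality $\dim \le 2$ I would argue that any irreducible closed subset is contained in one of the ``lines'' $\{\eta\}\times X$, $\{\infty\}\times X$, $X\times\{\eta\}$, $X\times\{\infty\}$ (the only infinite irreducible closed sets, since removing these leaves only the discrete diagonal set of pairs of finite primes), each of which is homeomorphic to $X$ of dimension $1$, so a strictly ascending chain inside the product has length at most $2$ once it leaves a point.

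The main obstacle I anticipate is purely bookkeeping about the topology: Theorem \ref{thm: fibre products} hands me the underlying set and tells me it has the subspace topology from $X \toptimes X$, but I still need to verify that this subspace is Noetherian and that its irreducible closed subsets are exactly the four coordinate lines plus the points — in particular that the closure operation behaves as expected, e.g. $\overline{\{(\eta,p)\}} = X\times\{p\}$ rather than something smaller, which requires knowing the closed sets of $X$ (finite sets of primes, or all of $X$) and that the product topology's closed sets intersected with our subset are generated by the obvious ones. None of this is deep, but it is the part where one must be careful rather than wave hands, so I would isolate it as a short topological lemma about $X \toptimes X$ before invoking the fibre-product theorem.
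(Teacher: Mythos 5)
Your overall strategy (apply Theorem \ref{thm: fibre products} to enumerate the surviving pairs, then read off the dimension from chains of irreducible closed subsets) is the right one, but there is a concrete error in the enumeration that contradicts the very sentence you cite as your ``key structural input.'' In this framework the residue field at a finite prime $p$ is \emph{not} $\F_p$. The stalk is $\cO_{X,p}=\bpgenquot{A_p}{1+(-1)\=0}$, whose pre-addition is generated \emph{only} by $1+(-1)\=0$; it is not the ring $\Z_{(p)}$. Hence the quotient by $\fm_p$ is $\kappa(p)=\bpgenquot{\{0\}\cup\Z_{(p)}^\times}{1+(-1)\=0}$, an infinite monoid-with-sign carrying no additive relations beyond $1+(-1)\=0$. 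Such a blueprint admits a morphism to a field $k$ of \emph{any} characteristic: choose any group homomorphism $\Z_{(p)}^\times\to k^\times$ with $-1\mapsto-1$ (e.g.\ using $\Z_{(p)}^\times\cong\{\pm1\}\times\bigoplus_{q\neq p}q^\Z$, kill the free part). This is exactly what the sentence preceding the proposition asserts, and it is the entire point of weakening the pre-addition from $\cR_\infty$ to $\gen{1+(-1)\=0}$. Your claim that $\kappa(p)=\F_p$ ``receives no map from fields of other characteristic'' therefore wrongly deletes all off-diagonal pairs $(p,q)$ of distinct finite primes from the fibre product. The correct conclusion from Theorem \ref{thm: fibre products} is that $X\times_\Funsq X$ is the \emph{full} topological product $X\toptimes X$ --- which is what makes it a genuine ``arithmetic surface'' rather than a union of four lines and a diagonal, and which is the advertised improvement over earlier approaches where the self-product collapses.

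Your final dimension count happens to survive the error, since the chain $\closure{(p,p)}\subsetneq\closure{(\eta,p)}\subsetneq\closure{(\eta,\eta)}$ uses only points that genuinely lie in the fibre product, and the upper bound $\dim\leq 2$ is in any case easier for the full product (every irreducible closed subset is contained in the closure of some $(x,y)$, and chains project to chains in each factor of dimension $1$). But the classification of irreducible closed subsets you propose for the upper bound is carried out on the wrong space, so that part of the argument would need to be redone for $X\toptimes X$ itself.
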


\begin{rem}
 The locally blueprinted space $\overz$ comes together with a morphism $\varphi\Spec\Z\to\overz$, which is a topological embedding. The image $Y$ of $\varphi$ (as a locally blueprinted space) is thus homeomorphic to $\Spec\Z$ and the underlying monoids of the structure sheaf are the same, only hte pre-additions of $\Spec\Z$ and $Y$ differ. In fact, $Y$ can be understood as a subspace of the blue scheme 
 \[ 
  V \quad = \quad \Spec\,\Bigl(\bpgenquot{(\Z,\cdot)}{1+(-1)\=0}\Bigr),
 \]
 which is an infinite-dimensional topological space. Then $Y$ consists of the generic point $\eta$ of $V$ and all its codimesnsion $1$ points, and the open sets of $Y$ stay in one-to-one correspondence to the open sets of $V$. The structure sheaf of $Y$ is isomorphic to the structure sheaf of $V$ as a functor from the topology of $Y$ resp. $V$ to blueprints. 
 
 Similarly, we find blue schemes underlying other affine (i.e.\ proper) open subsets of $X$. This might serve as a better explanation why we say that the structure sheaf of $X$ coherent.
\end{rem}

\section{Formulas for the Riemann zeta function}
\label{section: formulas for the riemann zeta function}

\noindent
The completed Riemann zeta function $\zeta^\ast(s)$ can be written as certain integrals over spaces that are connected to $\overz$.

\subsection{The ideal space}\label{subsection: ideal space}

Recall that $\cI_p$ denotes the set of all ideal of the stalk $\cO_{X,p}=\bpgenquot{A_p}{1+(-1)\=0}$ at $p$ for every prime $p\leq\infty$. We endow the sets $\cI_p$ with the following topologies and measures. For $p<\infty$, we define a basis of the topology as the sets
\[
 U_{b,c} \quad = \quad \{\ (p^l) \ | \ b\leq l \leq c \ \}
\]
where $b,c\in\N\cup\{\infty\}$ and $b\leq c$, with the usual convention that $(p^\infty)=(0)$. Then $\cI_p$ is nothing else than the one-point compactification of the discrete set of natural numbers. For every $s\in\C$, we define a measure $\mu_{p,s}$ on $\cI_p$ by
\[
 \mu_{p,s}(U_{b,c}) \quad = \quad \sum_{l=b}^c p^{-ls}.
\]
For $p=\infty$, we define a basis of the topology on $\cI_\infty$ as the sets
\[
 U_{b,c} \quad = \quad \{ \ (-a,a) \ | \ b\leq a \leq c \ \} \ \amalg \ \{ \ [-a,a] \ | \ b<a<c \ \} 
\]
for $0\leq b\leq c\leq 1$. Then the map $a\mapsto [-a,a]$ is a topological embedding of the interval $[0,1]$ into $\cI_\infty$. For every $s\in\C$, we define a measure $\mu_{\infty,s}$ on $\cI_\infty$ by
\[ 
 \mu_{\infty,s}(U_{b,c}) \quad = \quad \pi^{-s/2}\ \int\limits_b^c \ \bigl(-\ln x\bigr)^{s/2-1} \ dx.
\]
\begin{prop}
 For $p\leq\infty$ and $\Re(s)>1$, we have 
 \[ 
  \zeta_p(s) \quad = \quad \int\limits_{\cI_p} d\mu_{p,s}.
 \]
 In particular, the integral on the right hand side converges.
\end{prop}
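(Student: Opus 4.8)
## Proof strategy

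The plan is to treat the two cases $p<\infty$ and $p=\infty$ separately, in each case reducing the integral over $\cI_p$ to an elementary computation via the explicit description of the topology and the measure, and then matching the result against the given formula for the local zeta factor $\zeta_p(s)$.

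\textbf{The finite case.} First I would observe that for $p<\infty$ the space $\cI_p$ is the one-point compactification of $\N$, so it carries the natural measure obtained as a countably additive extension of the assignment $\mu_{p,s}(\{(p^l)\}) = p^{-ls}$ for $l\in\N$, together with $\mu_{p,s}(\{(0)\})=0$ when $\Re(s)>0$; this is consistent with the formula for $\mu_{p,s}(U_{b,c})$ since $U_{0,\infty}=\cI_p$ gives the full sum. Then $\int_{\cI_p} d\mu_{p,s} = \sum_{l=0}^{\infty} p^{-ls}$, which for $\Re(s)>0$ (in particular for $\Re(s)>1$) is the convergent geometric series $\sum_{l\ge 0} (p^{-s})^l = (1-p^{-s})^{-1} = \zeta_p(s)$. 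The only thing to check is that the set function $U_{b,c}\mapsto \sum_{l=b}^c p^{-ls}$ does extend to a genuine (countably additive, complex) measure on the Borel $\sigma$-algebra of the one-point compactification; since $\sum_{l\ge 0} |p^{-ls}| = \sum_{l\ge 0} p^{-l\Re(s)} < \infty$ for $\Re(s)>0$, this is immediate from Carathéodory-type extension for an absolutely convergent discrete measure, with no mass at the point at infinity.

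\textbf{The infinite case.} For $p=\infty$ the key step is to identify the integral with the Gamma-integral. The map $a\mapsto[-a,a]$ embeds $[0,1]$ into $\cI_\infty$, and up to the measure-zero set of ``open'' ideals $(-a,a)$ the measure $\mu_{\infty,s}$ is the pushforward along this embedding of the measure on $[0,1]$ with density $\pi^{-s/2}(-\ln x)^{s/2-1}\,dx$ (this is exactly what the prescription on $U_{b,c}$ says). Hence
\[
 \int_{\cI_\infty} d\mu_{\infty,s} \quad = \quad \pi^{-s/2}\int_0^1 (-\ln x)^{s/2-1}\,dx.
\]
Now I would substitute $x = e^{-t}$, so that $-\ln x = t$, $dx = -e^{-t}\,dt$, and the limits $x=0,1$ become $t=\infty,0$; this turns the integral into $\pi^{-s/2}\int_0^\infty t^{s/2-1}e^{-t}\,dt = \pi^{-s/2}\Gamma(s/2) = \zeta_\infty(s)$. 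The integral $\int_0^\infty t^{s/2-1}e^{-t}\,dt$ converges absolutely precisely when $\Re(s/2)>0$, i.e. $\Re(s)>0$, so in particular the asserted convergence holds for $\Re(s)>1$; one should also note that the ``open-interval'' ideals form a set of $\mu_{\infty,s}$-measure zero (each $U_{b,c}$ with $b<c$ assigns them no extra mass beyond the interval $[b,c]$), so they do not contribute.

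\textbf{Main obstacle.} The genuine computations here are routine; the only real point requiring care is the measure-theoretic bookkeeping in the archimedean case — namely checking that $\mu_{\infty,s}$ is well-defined as a measure on $\cI_\infty$ (that the prescribed values on the basis $U_{b,c}$ are consistent on overlaps and extend countably additively), and that the subset of ``open'' ideals $\{(-a,a)\}$ is $\mu_{\infty,s}$-null so that the integral reduces cleanly to the $[0,1]$-integral. Once the space $\cI_\infty$ is seen, via $a\mapsto[-a,a]$ together with the doubling ``open/closed'' pairs, to be measure-isomorphic to $[0,1]$ with the stated density, everything else follows from the substitution $x=e^{-t}$ and the integral representation of $\Gamma$. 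I would therefore front-load the write-up with a clean lemma establishing this measure isomorphism and relegate both case computations to a few lines each.
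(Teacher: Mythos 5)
Your proposal is correct and follows essentially the same route as the paper: the geometric series $\sum_{l\ge 0}p^{-ls}=(1-p^{-s})^{-1}$ for finite $p$, and the substitution $t=-\ln x$ reducing the archimedean integral to $\pi^{-s/2}\Gamma(s/2)$. The additional measure-theoretic bookkeeping you flag (countable additivity, nullity of the open-interval ideals) is not addressed in the paper's proof, which simply performs the two computations directly.
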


\begin{proof}
 Let $\Re(s)>1$ and $p<\infty$. Then 
 \[
  \int\limits_{\cI_p} d\mu_{p,s} \quad = \quad \sum_{l=0}^\infty p^{-ls} \quad = \quad \zeta_p(s).
 \]
 Let $p=\infty$. 
Then the variable substitution $t=-\ln x$ yields
 \[
   \int\limits_{\cI_\infty} d\mu_{\infty,s} \quad = \quad \pi^{-s/2}\ \int\limits_0^1 \ \bigl(-\ln x\bigr)^{s/2-1} \ dx \quad = \quad \pi^{-s/2}\ \int\limits_0^\infty \ e^{-t}t^{s/2-1}\ dt \quad = \quad \pi^{-s/2}\Gamma\bigl(\frac s2\bigr) \quad = \quad \zeta_\infty(s). \qedhere
 \]
\end{proof}

If we define the \emph{ideal space} $\cI$ as the product space $\prod\cI_p$ over all primes $p\leq \infty$ and $\mu_s=(\mu_{p,s})$ as the product measure on $\cI$, then we obtain as an immediate consequence that
 \[ 
  \zeta^\ast(s) \quad = \quad \int\limits_{\cI} d\mu_{s}.
 \]

 The ideal space can also be understood as an adelic version of the space of ideals $\cI_\Z$ of the integers $\Z$, and in fact, $\cI$ can be endowed with the structure of a monoid. It seems to be interesting to work out the precise relationship of $\cI$ to the Bost-Connes system $\Q[\Q/\Z]\rtimes\cI_\Z$.

\subsection{Arakelov divisors}\label{subsection: arakelov divisors}

Let $M$ be a pointed set. We denote the base point of $M$ by $\ast_M$ or simply by $\ast$. A \emph{pre-addition on $M$} is an equivalence relation $\cP$ on the semigroup $\N[M]=\{\sum a_i|a_i\in M\}$ of finite formal sums in $M$ with the following properties (as usual, we write $\sum m_i\=\sum n_j$ if $\sum m_i$ stays in relation to $\sum n_j$):
\begin{enumerate}
 \item $\sum m_i\=\sum n_j$ and $\sum p_k\=\sum q_l$ implies $\sum m_i+\sum p_k\=\sum n_j+\sum q_l$,
 \item $\ast\=(\text{empty sum})$, and 
 \item if $m\=n$, then $m=n$ (in $M$).
\end{enumerate}
Let $B=\bpquot A\cR$ be a blueprint. A \emph{blue module on $B$} is a set $M$ together with a pre-addition $\cP$ and a \emph{$B$-action $B\times M\to M$}, which is a map $(b,m)\mapsto b.m$ that satisfies the following properties:
\begin{enumerate}
 \item $1.m=m$, $0.m=\ast$ and $a.\ast=\ast$,
 \item $(ab).m=a.(b.m)$, and
 \item $\sum a_i\=\sum b_j$ and $\sum m_k\=\sum n_l$ implies $\sum a_i.m_k\=\sum b_j.n_l$.
\end{enumerate}
A \emph{blue $\cO_X$-module $\cM$} is a sheaf on $X$ that associates to every open set $U\subset X$ a blue $\cO_X(U)$-module $\cM(U)$ satisfying the usual compatibility condition with respect to the restriction maps. A \emph{line bundle on $X$} is an $\cO_X$-module that is locally isomorphic to $\cO_X$. As in the case of line bundles on a curve over a field, the tensor product induces a group structure on the set $\Pic X$ of isomorphism classes of line bundles, which we call the \emph{Picard group of $X$}.

Given a line bundle $\cL$ on $X$, we can fix isomorphisms between its stalks $\cL_p$ and $\cO_{X,p}$ for every prime $p\leq\infty$, and between its generic stalks $\cL_\eta$ and $\cO_{X,\eta}$. The specialization maps yield embeddings
\[
 \varphi_p: \quad \bpgenquot{A_p}{1+(-1)\=0} \ = \ \cO_{X,p} \ \stackrel\sim\longrightarrow \ \cL_p \ \longrightarrow \ \cL_\eta \ \stackrel\sim\longrightarrow \ \cO_{X,\eta} \ = \ \bpgenquot{(\Q,\cdot)}{1+(-1)\=0}
\]
for every $p\leq\infty$. Define $n_p:=\norm{\varphi_p(1)}_p$ where $\norm\blanc _p$ is the usual $p$-adic absolute value of $\Q$. Note that only finitely many $n_p$ are non-zero. This defines an Arakelov divisor 
\[
 D(\cL) \quad = \quad \sum_{p\leq\infty}\, n_p\cdot p \qquad \in \quad \Div^\arak X \ = \ \bigoplus_{p<\infty}p^\Z\oplus\R_{>0}
\]
This divisor is well-defined up to the choice of the isomorphism $\cL_\eta\to\cO_{X,\eta}$ or, equivalently, up to the choice of a \emph{principal divisor}, which is an Arakelov divisor $D=\sum n_p\cdot p$ of norm $N(D)=\prod n_p=1$ (since the class number of $\Q$ is $1$). Let $P(X)$ be the subgroup of all principal divisors and $\Cl X=\Div X/P(X)$ the \emph{Arakelov divisor class group of $X$}. In its idelic topology, $\Cl X$ is a locally compact group. For a certain \emph{effectivity measure} $\mu_\eff$ on $\Cl X$, Van der Geer and Schroof establish in \cite{Geer-Schroof00} the formula\footnote{as pointed out to me by Bora Yalkinoglu}
\[
 \zeta^\ast(s) \quad = \quad \int\limits_{\Cl X} N(D)^{-s}\;d\mu_\eff,
\]
which is basically a reformulation of a Tate integral. 

The association $\cL\mapsto D(\cL)$ defines an inclusion
\[
 \Pic X \quad \hookrightarrow \quad \Cl X
\]
of groups, whose image is the set of divisor classes of the form $[\sum n_p\cdot p]$ with $n_\infty\in\Q_{>0}$. Note that $\Pic X$ is a dense subgroup of $\Cl X$ and the topologies of $\Cl X$ and $\Pic X$ generate the same $\sigma$-algebra. Therefore we might rewrite the above formula as 
\[
 \zeta^\ast(s) \quad = \quad \int\limits_{\Pic X} N(\cL)^{-s}\;d\mu_\eff.
\]

\section{\'Etale cohomology?}
\label{section: cohomology}

\noindent
In order to find a cohomological interpretation of the completed zeta function $\zeta^\ast(s)$, one might try to transfer the formalism of \'etale cohomology to the setting of this note. We spend these last paragraphs with a description of a possible definition of \'etale cohomology for $X=\overz$.

Since it is not clear what a ``separable field extension'' of the ``residue fields'' $\kappa(x)$ for $x\in X$ should be (note that $\kappa(x)$ is \emph{not} a field), it is not clear what an unramified morphism of locally blueprinted spaces should be. Therefore, it is better to work with the notion of formally \'etale morphisms of finite presentation, which makes sense in the context of locally blueprinted spaces. This approach is, for instance, the same as used in non-archimedean analytic geometry and in log-geometry.

The various equivalent definitions of sheaf cohomology give rise to different theories for locally blueprinted spaces. Sheaf cohomology can be defined in terms of injective resolutions. This gives rise to a well-working formalism (see \cite{Deitmar11b}). However, injective objects in the category of blue modules over a blueprint are very different in nature from injective objects in usual module categories. For example, the first cohomology group of the projective line $\P^1_\Fun$ is infinite dimensional (see \cite{L11}), which stays in complete contrast to cohomology of the projective line over a ring. One might still hope that this cohomology helps in view towards the Riemann hypothesis since the first cohomology group of $\overz$ is expected to infinite dimensional. But, morally speaking, The reason for the infinite dimensionality of $H^1(\overz,\cO_\cT)$ should not lie in the different behaviour of injective objects, but in the fact that $\Z$ is not finitely generated over $\Fun$ as a monoid.

Sheaf cohomology defined by extension groups gives also rise to infinite cohomology groups, which is due to the fact that there are infinite series of epimorphisms with the same kernel. In particular, epimorphisms of blue modules are, in general, not normal. If one restricts to normal epimorphisms and monomorphisms to define extension groups, then cohomology becomes trivial since extensions of blue modules are always a wedge product of the modules. Morally speaking, one can say that objects over $\Fun$ rigidify in the sense that coordinates are fixed.

The only hopeful approach to sheaf cohomology seems to be Cech cohomology. For a generalizaton of the definition of Cech cohomology, it is essential to be able to refer to the additive inverses of elements. In other words, we need the blue modules in question to have an action of $\Funsq$. Therefore, it is important to consider $\overz$ as a curve over $\Funsq$. In the toy example $\P^1_\Funsq$, we obtain indeed the expected sheaf cohomology for line bundles, i.e.\ if $h^i(\cL)$ denotes the dimension of $H^i(\overz,\cL)$ over $\Funsq$ and $\cO(n)$ is (the $\Funsq$-model of) the twisted structure sheaf of $\P^1_\Funsq$, then $h^0(\cO(n))=n+1$ for $n\geq0$ and $h^0(\cO(n))=0$ for $n<0$, and similarly for $h^1(\cO(n))$.

A common (but questioned) opinion is that the algebraic closure of $\Fun$ resp.\ $\Funsq$ is the union $\Funinf$ of all ``cyclotomic field extensions'' $\Funn$. The automorphism group of $\Funinf$ is $\widehat\Z^\times$ and its fixed sub\emph{field} is $\Funsq$. Thus $\widehat\Z^\times$ can be considered as the Galois group $\Gal(\Funinf/\Funsq)$ of $\Funinf$ over $\Funsq$, and the embedding of $\Funinf$ into the maximal abelian extension $\Q^{\textup{ab}}$ of $\Q$ induces an isomorphism $\Gal(\Funinf/\Funsq)\to\Gal(\Q^{\textup{ab}}/\Q)$.

The action of $G=\Gal(\Funinf/\Funsq)$ on $\Funinf$ defines an action on $\overX=\overz\times_\Funsq\Funinf$ by acting trivially on the points, but only acting arithmetically on the factor $\Funinf$ of the structure sheaf. This action might play the role of the absolute Frobenius in the number field case.

Up to this point, we see that all necessary definitions to set up \'etale cohomology of $\overz$ resp.\ $\overX$ have a straight forward generalization from usual scheme theory. Whether this will result in a valuable theory with respect to the Riemann hypothesis (or other arithmetic problems) is not clear to me at this moment, but certainly requires further investigation.


\begin{small}
\addcontentsline{toc}{section}{References}
 \bibliographystyle{plain}

\end{small}

\end{document}